\newtheorem{lemma}{Lemma}
\newtheorem{remark}[lemma]{Remark}
\newtheorem{setting}[lemma]{Setting}
\newtheorem{theorem}[lemma]{Theorem}
\begin{document}
	
	\title{Mean-field type discrete stochastic linear quadratic optimal control problems}
	
	\date{}

	\author{Arzu Ahmadova$^1$, Nazim I. Mahmudov$^2$
		\bigskip
		\\
		\small{$^1$Faculty of Mathematics, University of Duisburg-Essen, 45127, Essen,
		Germany,\\
		e-mail: $arzu.ahmadova@uni-due.de$\\
		$^2$Department of Mathematics, Eastern Mediterranean University, 99628, T.R. North Cyprus,\\
		e-mail: $nazim.mahmudov@emu.edu.tr$}
	
	\smallskip
	}
	
	\maketitle
	\begin{abstract}
		\noindent In this paper, we consider linear quadratic optimal control with mean-field type for discrete-time stochastic systems with state and control dependent noise. 
		An optimal control problem is studied for a linear mean-field stochastic differential equation with a quadratic cost functional. The coefficients and the weighting matrices in the cost functional are all assumed to be deterministic.\\ \\
		\textit{Keywords:} Mean-field stochastic differential equations, discrete time stochastic systems, linear quadratic optimal control, Riccati difference equation	
	\end{abstract}

	\section{Introduction}

Stochastic linear quadratic (LQ) optimal control for discrete-time systems. For the discrete-time LQ control problems with control and/or state-dependent noise, there are some works in the literature. An early work \cite{ku} deals with a special case, whose systems are described by a difference equation in which both the system matrix and the control matrix are multiplied by white, possibly correlated, scalar random sequences. In another paper \cite{beghi} the optimal control law for the systems with only control-dependent noise is derived. It is worth noting that the state weight matrix is nonnegative and the control weight matrix is positive definite in both papers. There are also several works concerning LQ optimal control problems, see textbook \cite{Anderson} and related articles \cite{EliottLiNi,Yong,RamiZhou}.
	
	The theory of MF-SDEs goes back to Kac, who in (Kac 1956) presented a stochastic toy model for the kinetic Vlasov equation of plasma, leading to the so-called stochastic McKean-Vlasov differential equation. Since then, the study of the related issues and their applications has become a remarkable and serious endeavor of researchers in the field of applied probability and optimal stochastic control, including financial engineering. Note that the problem (MF-LQ) reduces to the classical stochastic LQ optimal control problem in the absence of the mean-field part. For relevant results and historical comments on this topic, the reader is referred to (Ait Rami et al. \cite{rami}, Chen and Yong 2000, Chen et al. ), among others.
	
	Most previous researchers mainly studied indefinite stochastic LQ problems without constraints. However, some constraints are considerable importance in many physical systems. The finite time
	indefinite stochastic LQ control with linear terminal state constraint was discussed \cite{huang}. It is a valuable research topic to generalize those results to the discrete-time systems.
	
	In this paper, we discuss the maximum principle for the optimal control of discrete-time systems described by mean-field stochastic difference equations. As far as we know, there are few results on such stochastic control problems. In fact, discrete-time control systems are of great value in practice. For example, digital control can be formulated as a discrete-time control problem in which the sampled data are obtained at discrete times. In a discrete-time system, the Riccati difference equation plays an important role in synthesizing the optimal control. 

	 We believe that our method can also be applied to more complicated discrete-time stochastic optimal control problems, for example, problems with delays, terminal constraint problems, and problems with neutral term.

\section{Mathematical description}\label{secmath}
In Section \ref{secmath} we present in Setting \ref{setting} the mathematical framework which we use to study the discrete-time stochastic linear quadratic optimal control problems of mean-field type.
\begin{setting}\label{setting}
	Let $\left\| \cdot \right\|$ be a norm, $\langle \cdot, \cdot \rangle$ be an inner product, let $n_1, n_2 \in \mathbb{N}$ and denote the space of ($n_1\times n_2$)-matrices by $\mathbb{R}^{n_1\times n_2}$, and let $\mathbb{R}^{n_1}\coloneqq \mathbb{R}^{n_1\times 1}$, that is, each element of $\mathbb{R}^{n_1}$ is understood as a column vector, let $I$ be the unit matrix with appropriate dimension. For each matrix $A$, $A^{\intercal}$ denotes the transpose of $A$. For a vector $x\in \mathbb{R}^{n}$ denote by $x^{\intercal}$ its transpose. For a symmetric matrix $A$ and vectors $y,y_1,y_2$ of matching dimensions, we denote $A[y]^2\coloneqq y^{\intercal}Ay$, $A[y_1,y_2]\coloneqq y_1^{\intercal}Ay_2$.\\
	Let $(\Omega, \mathfrak{F}, \mathbb{P})$ be a complete probability space and $N$ be a positive integer. $\mathbb{T}:=\left\{  t_{k}=t_{0}+kh,\ h>0\right\}  _{k=0}^{N}$, let $\left\lbrace w(t_{k}): k=1,\ldots,N+1 \right\rbrace $ be a sequence of $\mathfrak{F}_{k}$-measurable $\mathbb{R}^{d}$-valued random variables, and let $\mathfrak{F}_{k}\subseteq \mathfrak{F}$ be the $\sigma$-field generated by $w(t_{1}),\ldots,w(t_{k})$, i.e., $\mathfrak{F}_{k}=\sigma\left\lbrace w(t_{1}),\ldots,w(t_{k})\right\rbrace $, $k=1,\ldots,N+1$, and $\mathfrak{F}_{0}=\left\lbrace \emptyset, \Omega \right\rbrace $. Let the expectation operator $\textbf{E}$ be denoted by $\textbf{E}x(t)=\int_{\Omega}x(t)\mathbb{P}(\mathrm{d}w)$ for each $w\in \Omega$. For each $t\geq  0$, $\textbf{E}\left\lbrace\cdot \mid \mathfrak{F}_{t} \right\rbrace  $ is the conditional expectation given by $\mathfrak{F}_{t}$. Assume for all $k\in \mathbb{N}$ that $ w_{h}(t_{k})\coloneqq w(t_{k+1})-w(t_{k})$ satisfies the following conditions:
	\begin{enumerate}
		\item[(wi)] For every $w_{h}\left(  t_{k}\right)  =\left(  w_{h}^{1}\left(
		t_{k}\right),...,w_{h}^{d}\left(  t_{k}\right)  \right)  ,\ w_{h}^{1}\left(
		t_{k}\right)  ,...,w_{h}^{d}\left(  t_{k}\right)  $ are independent
		$\mathbb{R}$-valued random variables.
		
		\item[(wii)] $\textbf{E}\left\{  w_{h}\left(  t_{k}\right)  \mid
		\mathfrak{F}_{k}\right\}  =0$, \quad $\textbf{E}\left\{  \left(  w_{h}^{j}\left(
		t_{k}\right)  \right)  ^{2}\mid\mathfrak{F}_{k}\right\}  =h,\\
		\textbf{E}\left(  w_{h}^{j}\left(  t_{k}\right)  \right)  ^{4}<\infty,$ \quad 
		$\textbf{E}\left( w_{h}^{m}\left(  t_{k}\right)  w_{h}^{l}\left(  t_{k}\right)\right) 
		=\left(  t_{k+1}-t_{k}\right)  \delta_{ml}I$.
	\end{enumerate}
	Moreover, let $\widehat{\mathfrak{F}}_{k}=\sigma\left\{  w_{h}\left(
	t_{k+1}\right),\ldots,w_{h}\left(  t_{N}\right)  \right\}$.  Note that $\mathfrak{F}_{k}$ and $\widehat{\mathfrak{F}}_{k}$ are
	independent. Let for all $v \in \mathbb{R}^{r}$ $\Delta f(t,v)\coloneqq f(t, \widehat{x}(t),\textbf{E}\widehat{x}(t),v)-f(t, \widehat{x}(t),\textbf{E}\widehat{x}(t),\widehat
	{u}(t))$,  and let $\mathbb{F}=\left\{  \mathfrak{F}_{k}:k=0,1,...,N\right\}$ be the set.
	A random variable $z=\left\{  z_{k}:k=0,1,...,N\right\}  $ is called
	$\mathbb{F}$-predictable if the random variable
	$z_{k}$ is $\mathfrak{F}_{k}$-measurable for every $k=0,1,...,N,$. Let $L^{2}(\Omega, \mathfrak{F}_{t_k}, \mathbb{R}^{n})$ be the set of all $\mathbb{R}^{n}$-valued $\mathfrak{F}_{t_k}$-measurable random variables $x(t_{k})$ with $\textbf{E}\|x(t_{k})\|^{2}<\infty$.
\end{setting}

	\section{Discrete-time stochastic LQ optimal control problem}\label{sec:6}
	
	In this section, we consider a class of stochastic discrete linear-quadratic optimal control problems. The state equation is the following linear stochastic difference equation:
	\begin{align}\label{lq}
		\begin{cases}
			x_{k+1}  =A_{k}x_{k}+\overline{A}_{k}\mathbf{E}x_{k}+B_{k}u_{k}+\left(C_{k}x_{k}+\overline{C}_{k}\mathbf{E}x_{k}+D_{k}u_{k}\right)  w_{k+1},\\
			x_{0}  =\xi\in\mathbb{R}^{n},
		\end{cases}
	\end{align}
	where $A_{k},B_{k}\in\mathbb{R}^{n\times n}$, and $C_{k},D_{k}\in
	\mathbb{R}^{n\times r}$ are given deterministic matrices. $w_{k},$
	$k=1,...,N,$ defined on a probability space $\left(  \Omega,\mathfrak{F}%
	,\mathbb{P}\right)  $, represents the stochastic disturbances, which is
	assumed to be a one dimensional martingale difference sequence satisfying (wi)
	and (wii). The cost functional associated with (\ref{lq}) is%
	\begin{equation}
		J\left(  u\right)  =\textbf{E}x_{N}^{\intercal}Q_{N}x_{N}+\textbf{E}\sum
		_{k=0}^{N-1}\left(  x_{k}^{\intercal}Q_{k}x_{k}+\left(\mathbf{E}x_{k} \right)^{\intercal} \overline
		{Q}_{k}\mathbf{E}x_{k}+u_{k}^{\intercal}R_{k}u_{k}\right)  , \label{lq1}%
	\end{equation}
	where $Q_{k}\in S^{n},R_{k}\in S^{r}$ are symmetric matrices with appropriate
	dimensions. Note that $S^{m}$
	is the set of all symmetric matrices of order $(m\times m)$. We now introduce the
	following assumption concerning the weighting matrices  $Q_{k},R_{k}, \overline{Q}_{k}, \overline{R}_{k}$ in the cost functional.
	\begin{description} 
		\item[(J)] The matrices $Q_{k}, \overline{Q}_{k}\in S^{n} $ and  $R_{k},\overline{R}_{k}\in S^{r}$ satisfy the following
		\begin{equation*}
			Q_{k}, 	Q_{k}+\overline{Q}_{k}\geq 0, \qquad R_{k}, R_{k}+\overline{R}_{k}>0.
		\end{equation*}
	\end{description}
	
	\textbf{Problem (MF-LQ):} The problem is to find $\widehat{u},\ \widehat{u}_{k}\in \mathcal{L}^{2}\left(
	\Omega,\mathfrak{F}_{k},\mathbb{R}^{r}\right)  $ such that
	\begin{equation*}
		J\left(  \widehat{u}\right)  =\inf\left\{  J\left( \widehat{u}_{k}\right)  :\widehat{u}_{k}\in
		\mathcal{L}^{2}\left(  \Omega,\mathfrak{F}_{k},\mathbb{R}^{r}\right)  \right\}  .
	\end{equation*}
	We then call $\widehat{u}$ an optimal control for stochastic discrete
	linear-quadratic problem \eqref{lq}-\eqref{lq1}.
	
	\begin{theorem}\label{lqp}
		Assume that assumption \textrm{(J)} holds. There exists a unique optimal control
		$\widehat{u}$ of the form%
		\[
		\widehat{u}_{k}=-\left(  R_{k}+B_{k}^{\intercal}P_{k+1}B_{k}+D_{k}^{\intercal
		}P_{k+1}D_{k}\right)  ^{-1}\left(  B_{k}^{\intercal}P_{k+1}A_{k}
		+D_{k}^{\intercal}P_{k+1}C_{k}\right)  x_{k}
		\]
		with
		\begin{align}\label{ricc}
			\begin{cases}
				P_{k}  &  =Q_{k}+A_{k}^{\intercal}P_{k+1}A_{k}+C_{k}^{\intercal}P_{k+1}%
				C_{k}\\
				&  +\left(  A_{k}^{\intercal}P_{k+1}B_{k}+C_{k}^{\intercal}P_{k+1}%
				D_{k}\right)  \left(  R_{k}+B_{k}^{\intercal}P_{k+1}B_{k}+D_{k}^{\intercal
				}P_{k+1}D_{k}\right)  ^{-1}\\
				&  \times\left(  B_{k}^{\intercal}P_{k+1}A_{k}+D_{k}^{\intercal}P_{k+1}%
				C_{k}\right)  ,\\
				P_{N}  &  =Q_{N}, 
			\end{cases}
		\end{align}
		having the property
		\[
		P_{k}\geq0.
		\]
		
	\end{theorem}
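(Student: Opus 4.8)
The plan is to argue by the completion-of-squares method, which simultaneously yields the optimal feedback, the recursion \eqref{ricc}, and the nonnegativity $P_k\geq 0$. Define $P_k$ by the backward recursion \eqref{ricc} with $P_N=Q_N$, and abbreviate the control-weight and cross blocks by $\mathcal{R}_k:=R_k+B_k^\intercal P_{k+1}B_k+D_k^\intercal P_{k+1}D_k$ and $\mathcal{S}_k:=B_k^\intercal P_{k+1}A_k+D_k^\intercal P_{k+1}C_k$, so that the candidate control reads $\widehat u_k=-\mathcal{R}_k^{-1}\mathcal{S}_k x_k$. First I would verify that this recursion is well defined, i.e. that $\mathcal{R}_k$ is invertible at every step, carrying this out jointly with the nonnegativity claim by a single backward induction on $k$: at $k=N$ one has $P_N=Q_N\geq 0$ by (J); assuming $P_{k+1}\geq 0$, the block $\mathcal{R}_k\geq R_k>0$ is strictly positive definite, hence invertible.

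For the inductive step I would use a Schur-complement argument. Writing $P:=P_{k+1}$, the symmetric matrix
\[
M_k:=\begin{pmatrix} Q_k & 0 \\ 0 & R_k \end{pmatrix}+\begin{pmatrix} A_k^\intercal \\ B_k^\intercal \end{pmatrix}P\begin{pmatrix} A_k & B_k \end{pmatrix}+\begin{pmatrix} C_k^\intercal \\ D_k^\intercal \end{pmatrix}P\begin{pmatrix} C_k & D_k \end{pmatrix}
\]
is a sum of nonnegative terms, hence $M_k\geq 0$; its $(2,2)$-block is exactly $\mathcal{R}_k>0$, so the Schur complement $M_k/\mathcal{R}_k=Q_k+A_k^\intercal P_{k+1}A_k+C_k^\intercal P_{k+1}C_k-\mathcal{S}_k^\intercal\mathcal{R}_k^{-1}\mathcal{S}_k$ equals $P_k$ and is nonnegative. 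This gives $P_k\geq 0$ and closes the induction.

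Next I would prove optimality through the telescoping identity for the quadratic form $\mathbf{E}[x_k^\intercal P_k x_k]$. Writing the drift $\alpha_k:=A_k x_k+\overline A_k\mathbf{E}x_k+B_k u_k$ and diffusion $\beta_k:=C_k x_k+\overline C_k\mathbf{E}x_k+D_k u_k$, so that $x_{k+1}=\alpha_k+\beta_k w_{k+1}$, the martingale-difference properties (wi)--(wii) give $\mathbf{E}[x_{k+1}^\intercal P_{k+1}x_{k+1}\mid\mathfrak{F}_k]=\alpha_k^\intercal P_{k+1}\alpha_k+\beta_k^\intercal P_{k+1}\beta_k$, since the cross term linear in $w_{k+1}$ has zero conditional mean. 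Summing the increments over $k=0,\dots,N-1$, telescoping against $P_N=Q_N$, and inserting the running and terminal costs, the recursion \eqref{ricc} collapses every quadratic-in-$x_k$ contribution into a perfect square and leaves
\[
J(u)=x_0^\intercal P_0 x_0+\sum_{k=0}^{N-1}\mathbf{E}\big[(u_k+\mathcal{R}_k^{-1}\mathcal{S}_k x_k)^\intercal\mathcal{R}_k(u_k+\mathcal{R}_k^{-1}\mathcal{S}_k x_k)\big].
\]
Since $\mathcal{R}_k>0$ each summand is nonnegative and vanishes precisely for $u_k=-\mathcal{R}_k^{-1}\mathcal{S}_k x_k=\widehat u_k$, which simultaneously gives existence, uniqueness (by strict positivity of $\mathcal{R}_k$) of the minimizer, and the optimal value $x_0^\intercal P_0 x_0$.

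The step I expect to be the main obstacle is the bookkeeping of the mean-field terms $\overline A_k\mathbf{E}x_k$, $\overline C_k\mathbf{E}x_k$ and $(\mathbf{E}x_k)^\intercal\overline Q_k\mathbf{E}x_k$: when $\mathbf{E}[\alpha_k^\intercal P_{k+1}\alpha_k+\beta_k^\intercal P_{k+1}\beta_k]$ is expanded, these produce extra contributions carried by $\mathbf{E}x_k$ that the single recursion \eqref{ricc} does not absorb. The natural device is to split $x_k=(x_k-\mathbf{E}x_k)+\mathbf{E}x_k$ and treat the centered fluctuation and the deterministic mean separately: the fluctuation is driven by $A_k,C_k$ and governed by $P_k$ as in \eqref{ricc}, whereas the mean evolves deterministically under $A_k+\overline A_k$, $C_k+\overline C_k$ with weights $Q_k+\overline Q_k$, $R_k+\overline R_k$. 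I would run the same completion of squares on each block and then check that the two pieces recombine consistently with the stated feedback; this recombination is the delicate part, and it is exactly where the conditions $Q_k+\overline Q_k\geq 0$ and $R_k+\overline R_k>0$ from (J) are needed.
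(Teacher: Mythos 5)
Your completion-of-squares route is genuinely different from the paper's proof, which runs a discrete maximum principle: it differentiates a Hamiltonian $H_k$ in $u_k$, introduces adjoint variables $p_k$ with terminal condition $p_{N+1}=-Q_{N+1}x_{N+1}-\overline{Q}_{N+1}\mathbf{E}x_{N+1}$, solves for $u_N$, identifies $p_N=P_N x_N$, and asserts that iterating backwards yields \eqref{ricc}. Two of your steps are sound and in fact deliver things the paper's argument never establishes: the backward induction via the Schur complement of $M_k$ proves both that $\mathcal{R}_k$ is invertible (so that \eqref{ricc} is well posed) and that $P_k\geq 0$, and the strict positivity of $\mathcal{R}_k$ in the completed square gives uniqueness of the minimizer; the paper's proof addresses none of these three claims. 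One caveat even here: your Schur complement equals $Q_k+A_k^\intercal P_{k+1}A_k+C_k^\intercal P_{k+1}C_k-\mathcal{S}_k^\intercal\mathcal{R}_k^{-1}\mathcal{S}_k$, and your telescoping identity likewise requires the recursion with a \emph{minus} sign before the cross term, whereas \eqref{ricc} as printed carries a plus sign; so what your argument characterizes is the standard (and presumably intended) minus-sign Riccati recursion, not \eqref{ricc} verbatim. (A second, minor normalization point: your conditional-variance identity uses $\mathbf{E}\{w_{k+1}^2\mid\mathfrak{F}_k\}=1$, while (wii) states the value $h$; the paper makes the same implicit normalization.)

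The genuine gap is exactly the step you flag and postpone. In the presence of $\overline{A}_k,\overline{C}_k,\overline{Q}_k$, the expansion of $\mathbf{E}\bigl[\alpha_k^\intercal P_{k+1}\alpha_k+\beta_k^\intercal P_{k+1}\beta_k\bigr]$ leaves over terms such as $2(\mathbf{E}x_k)^\intercal\bigl(A_k^\intercal P_{k+1}\overline{A}_k+C_k^\intercal P_{k+1}\overline{C}_k\bigr)\mathbf{E}x_k$ and $2(\mathbf{E}x_k)^\intercal\bigl(\overline{A}_k^\intercal P_{k+1}B_k+\overline{C}_k^\intercal P_{k+1}D_k\bigr)\mathbf{E}u_k$, which the single recursion \eqref{ricc} cannot absorb, so the sum does not collapse to the perfect square you wrote. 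Your proposed repair, splitting $x_k=(x_k-\mathbf{E}x_k)+\mathbf{E}x_k$, is precisely the device of \cite{EliottLiNi}, but it necessarily produces \emph{two} (coupled) Riccati recursions, one attached to $(A_k,C_k,Q_k)$ for the fluctuation and one attached to $(A_k+\overline{A}_k,C_k+\overline{C}_k,Q_k+\overline{Q}_k)$ for the mean, and the optimal control it yields has the form
\[
\widehat u_k=-\mathcal{R}_k^{-1}\mathcal{S}_k\left(x_k-\mathbf{E}x_k\right)-\widetilde{\mathcal{R}}_k^{-1}\widetilde{\mathcal{S}}_k\,\mathbf{E}x_k,
\]
where $\widetilde{\mathcal{R}}_k,\widetilde{\mathcal{S}}_k$ are built from the second recursion; the two gains are distinct in general. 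This is not of the single-gain form $-\mathcal{R}_k^{-1}\mathcal{S}_k x_k$ asserted in the theorem unless $\overline{A}_k=\overline{C}_k=\overline{Q}_k=0$, i.e.\ unless the mean-field structure is absent. So the ``recombination'' you defer as delicate is in general impossible, and your plan, carried out honestly, proves the two-Riccati theorem of \cite{EliottLiNi} rather than the stated one. The paper's own proof stumbles at the same spot --- the $\mathbf{E}\{\cdot\}$ operators and bar-coefficients present in its formulas for $L_N$ and $p_N$ are silently discarded when \eqref{ricc} is asserted --- but that observation does not close the gap in your argument; it only shows that the statement itself is accurate only in the non-mean-field specialization.
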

	
	\begin{proof}
		Assume that $u_{k},$ $k=0,1,\ldots,N-1,$ satisfies the following condition:
		\[
		\frac{\partial}{\partial u_{k}}H_{k}=\mathbf{E}\left\{  p_{k+1}^{\intercal
		}\mid\mathfrak{F}_{k}\right\}  B_{k}+\mathbf{E}\left\{  p_{k+1}^{\intercal
		}w_{k+1}^{j}\mid\mathfrak{F}_{k}\right\}  D_{k}-u_{k}^{\intercal}R_{k}=0,
		\]
		where
		\begin{align*}
			\begin{cases*}
				p_{k} =A_{k}^{\intercal}\mathbf{E}\left\{  p_{k+1}\mid\mathfrak{F}_{k}\right\}+\mathbf{E}\left\{  \overline{A}_{k}^{\intercal}p_{k+1}\right\}
				+\sum_{j=1}^{d}\left(  B_{k}^{j}\right)  ^{\intercal}\mathbf{E}\left\{  p_{k+1}w_{k+1}^{j}\mid\mathfrak{F}_{k}\right\}\\
				\hspace{0.5cm}+\sum_{j=1}^{d}
				\left(  \overline{B}_{k}^{j}\right)  ^{\intercal}\mathbf{E}\left\{
				p_{k+1}w_{k+1}^{j}\right\}  -Q_{k}x_{k}-\mathbf{E}\overline{Q}_{k}x_{k},\\
				p_{N+1}  =-Q_{N+1}x_{N+1}-\overline{Q}_{N+1}\mathbf{E}x_{N+1}
				,\ \ k=0,1,...,N.
			\end{cases*}
		\end{align*}
		
		It follows that for $k=N$ we have
		\begin{align}\label{qa1}
			-\mathbf{E}\left\{  x_{N+1}^{\intercal}Q_{N+1}\mid\mathfrak{F}_{N}\right\}
			B_{N}&-\left(  \mathbf{E}x_{N+1}^{\intercal}\right)  \overline{Q}_{N+1}%
			B_{N}-\mathbf{E}\left\{  x_{N+1}^{\intercal}Q_{N+1}w_{N+1}\mid\mathfrak{F}%
			_{N}\right\}  D_{N}\\
			&-\mathbf{E}\left\{  \left(  \mathbf{E}x_{N+1}^{\intercal
			}\right)  \overline{Q}_{N+1}w_{N+1}\mid\mathfrak{F}_{N}\right\}  D_{N}%
			-u_{N}^{\intercal}R_{N}=0. \nonumber
		\end{align}
		Substituting the following expression
		\[
		x_{N+1}=A_{N}x_{N}+\overline{A}_{N}\mathbf{E}x_{N}+B_{N}u_{N}+\left(
		C_{N}x_{N}+\overline{C}_{N}\mathbf{E}x_{N}+D_{N}u_{N}\right)  w_{N+1}%
		\]
		into \eqref{qa1} yields
		\begin{align*}
			&-\left(  A_{N}x_{N}+\overline{A}_{N}\mathbf{E}x_{N}+B_{N}u_{N}\right)^{\intercal}Q_{N+1}B_{N}\\
			&-\left(  A_{N}\mathbf{E}x_{N}+\overline{A}_{N}\mathbf{E}x_{N}+B_{N}\mathbf{E}u_{N}\right)^{\intercal}\overline{Q}_{N+1}B_{N}\\
			&-\left(  C_{N}x_{N}+\overline{C}_{N}\mathbf{E}x_{N}+D_{N}u_{N}\right)
			^{\intercal}Q_{N+1}D_{N}-u_{N}^{\intercal}R_{N}=0.
		\end{align*}
		Finding $u_{N}$ from above expression gives us 
		\begin{align*}
			&\left(R_{N}+B_{N}^{\intercal}Q_{N+1}B_{N}+B_{N}^{\intercal}\overline{Q}_{N+1}B_{N}\mathbf{E}\left\{  \cdot\right\}  +D_{N}^{\intercal}Q_{N+1}D_{N}\right)  u_{N}\\
			&=-\Big(  B_{N}^{\intercal}Q_{N+1}A_{N}+B_{N}^{\intercal}Q_{N+1}\overline{A}_{N}\mathbf{E}\left\{  \cdot\right\}  +B_{N}^{\intercal}\overline{Q}_{N+1}A_{N}+B_{N}^{\intercal}\overline{Q}_{N+1}\overline{A}_{N}\mathbf{E}
			\left\{  \cdot\right\}\\
			&+D_{N}^{\intercal}Q_{N+1}C_{N}+D_{N}^{\intercal
			}Q_{N+1}\overline{C}_{N}\mathbf{E}\left\{  \cdot\right\}\Big) x_{N}.
		\end{align*}
		Therefore, $u_{N}$ will become as follows:
		\begin{align*}
			&u_{N}=\widehat{u}_{N}=-\left(  R_{N}+B_{N}^{\intercal}Q_{N+1}B_{N}+D_{N}^{\intercal
			}Q_{N+1}D_{N}\right)^{-1}\Big(B_{N}^{\intercal}Q_{N+1}A_{N}+B_{N}^{\intercal}Q_{N+1}
			\overline{A}_{N}\mathbf{E}\left\{  \cdot\right\} \\
			& +D_{N}^{\intercal}
			Q_{N+1}C_{N}+D_{N}^{\intercal}Q_{N+1}\overline{C}_{N}\mathbf{E}\left\{
			\cdot\right\}  \Big)x_{N}=:L_{N}x_{N}.
		\end{align*}
		Similarly, we have
		\begin{align*}
			&\mathbf{E}\left\{  p_{N+1}\mid\mathfrak{F}_{N}\right\} \\
			&  =-\mathbf{E}%
			\left\{  Q_{N+1}x_{N+1}\mid\mathfrak{F}_{N}\right\} \\
			&  =-\mathbf{E}\left\{  Q_{N+1}\left(  A_{N}x_{N}+\overline{A}_{N}%
			\mathbf{E}x_{N}+B_{N}u_{N}+\left(  C_{N}x_{N}+\overline{C}_{N}\mathbf{E}%
			x_{N}+D_{N}u_{N}\right)  w_{N+1}\right)  \mid\mathfrak{F}_{N}\right\} \\
			&  =-\mathbf{E}\left\{  Q_{N+1}\left(  A_{N}x_{N}+\overline{A}_{N}%
			\mathbf{E}x_{N}+B_{N}L_{N}x_{N}\right)  \mid\mathfrak{F}_{N}\right\}
		\end{align*}
		and 
		\begin{align*}
			p_{N}  &  =A_{N}^{\intercal}\mathbf{E}\left\{  p_{N+1}\mid\mathfrak{F}%
			_{N}\right\}  +\overline{A}_{N}^{\intercal}\mathbf{E}\left\{  p_{N+1}\right\}\\
			&+C_{N}^{\intercal}\mathbf{E}\left\{  p_{N+1}w_{N+1}\mid\mathfrak{F}%
			_{N}\right\}  +\overline{C}_{N}^{\intercal}\mathbf{E}\left\{  p_{N+1}%
			w_{N+1}\right\}  -Q_{N}x_{N}\\
			&  =-\left(
			\begin{array}
				[c]{c}%
				A_{N}^{\intercal}Q_{N+1}A_{N}+A_{N}^{\intercal}Q_{N+1}\overline{A}%
				_{N}\mathbf{E}\left\lbrace \cdot\right\rbrace+A_{N}^{\intercal}Q_{N+1}B_{N}L_{N}\\
				+\overline{A}_{N}^{\intercal}Q_{N+1}A_{N}\mathbf{E}\left\lbrace \cdot\right\rbrace+\overline{A}%
				_{N}^{\intercal}Q_{N+1}\overline{A}_{N}\mathbf{E}\left\lbrace \cdot\right\rbrace+\overline{A}_{N}^{\intercal
				}Q_{N+1}B_{N}L_{N}\mathbf{E}\left\lbrace \cdot\right\rbrace\\
				+C_{N}^{\intercal}Q_{N+1}C_{N}+C_{N}^{\intercal}Q_{N+1}\overline{C}%
				_{N}\mathbf{E}\left\lbrace \cdot\right\rbrace+C_{N}^{\intercal}Q_{N+1}D_{N}L_{N}\\
				+\overline{C}_{N}^{\intercal}Q_{N+1}C_{N}+\overline{C}_{N}^{\intercal}%
				Q_{N+1}\overline{C}_{N}\mathbf{E}\left\lbrace \cdot\right\rbrace +\overline{C}_{N}^{\intercal}Q_{N+1}%
				D_{N}L_{N}+Q_{N}%
			\end{array}
			\right)  x_{N}.
		\end{align*}
		Therefore, we obtain
		\begin{align*}
			p_{N}  &  =A_{N}^{\intercal}Q_{N+1}A_{N}+A_{N}^{\intercal}Q_{N+1}\overline
			{A}_{N}\mathbf{E}\left\lbrace \cdot\right\rbrace+\overline{A}_{N}^{\intercal}Q_{N+1}A_{N}\mathbf{E}\left\lbrace \cdot\right\rbrace%
			+\overline{A}_{N}^{\intercal}Q_{N+1}\overline{A}_{N}\mathbf{E}\left\lbrace \cdot\right\rbrace\\
			&  +C_{N}^{\intercal}Q_{N+1}C_{N}+C_{N}^{\intercal}Q_{N+1}\overline{C}%
			_{N}\mathbf{E}\left\lbrace \cdot\right\rbrace+\overline{C}_{N}^{\intercal}Q_{N+1}C_{N}+\overline{C}%
			_{N}^{\intercal}Q_{N+1}\overline{C}_{N}\mathbf{E}\left\lbrace \cdot\right\rbrace\\
			&  +\left(  A_{N}^{\intercal}Q_{N+1}B_{N}+\overline{A}_{N}^{\intercal}%
			Q_{N+1}B_{N}+C_{N}^{\intercal}Q_{N+1}D_{N}+\overline{C}_{N}^{\intercal}%
			Q_{N+1}D_{N}\right) \\
			&\times \left(  R_{N}+B_{N}^{\intercal}Q_{N+1}B_{N}%
			+D_{N}^{\intercal}Q_{N+1}D_{N}\right)  ^{-1}\\
			&  \times\left(  B_{N}^{\intercal}Q_{N+1}A_{N}+B_{N}^{\intercal}%
			Q_{N+1}\overline{A}_{N}\mathbf{E}\left\{  \cdot\right\}  +D_{N}^{\intercal
			}Q_{N+1}C_{N}+D_{N}^{\intercal}Q_{N+1}\overline{C}_{N}\mathbf{E}\left\{
			\cdot\right\}  \right)  +Q_{N}x_{N}\\
			&  :=P_{N}x_{N}%
		\end{align*}
		Repeating this procedure step by step for $p_{N-2},p_{N-3},...,p_{0},$ yields
		the Riccati difference equation \eqref{ricc}.
	\end{proof}
	\begin{remark}
		In comparison to our method which we employed in this section, Eliotte et al. \cite{EliottLiNi} have taken expectation from stochastic difference equation in the beginning of the proof to get Riccati difference equation. Instead, we use the iterative method by taking derivative from Hamiltonian function with respect to $u_{k}$ to obtain \eqref{ricc}.
	\end{remark}

	\section{Conclusions}
	As a conclusion, we studied linear quadratic optimal control with mean-field type for discrete-time stochastic systems with state and control dependent noise. Moreover, our optimal control problem is studied for a linear mean-field stochastic difference equation with a quadratic cost functional. The coefficients and the weighting matrices in the cost functional are all assumed to be deterministic.
		
	Although there are many articles on the maximum principle of stochastic and deterministic systems, there remain many other interesting open problems concerning their fractional analogues, which can be extended by methods analogous to those used for fractional derivations of Caputo and Riemann-Liouville type. For example, one can consider the method given in \cite{yusubov-mahmudov} to study linear quadratic optimal control problem in which a dynamical system is controlled by a nonlinear Caputo fractional state equation.


\begin{thebibliography}{99}  
	
		\bibitem{mah2019}
		Mahmudov, NI., Necessary first-order and second-order optimality conditions in discrete-time stochastic systems.
		J. Optim. Theory Appl. 182, 1001–1018 (2019)
		\bibitem {mahbash}
		Mahmudov, NI., Bashirov, AE., First order and second
		order necessary conditions of optimality for stochastic systems. Statistics
		and control of stochastic processes (Moscow, 1995/1996), 283--295, World Sci.
		Publ., River Edge, NJ, (1997)
		
		\bibitem {yong}
		Yong J., Zhou, X., Stochastic controls: Hamiltonian systems and
		HJB equations. Springer Verlag, (1999)
		
		\bibitem {rami}
		Rami, M., Chen, X., Zhou, X., Discrete-time indefinite LQ
		control with state and control dependent noises, Journal of Global
		Optimization, 23, 245--265 (2002)
		
	
		
		\bibitem {zhang}
		Zhang, H., Duan, G., Xie, L., Linear quadratic regulation for
		linear time-varying systems with multiple input delays, Automatica, vol. 42,
		1465--1476 (2006)
	
		\bibitem {EliottLiNi}
		Elliott, R., Li, X., Ni, Y., Discrete time
		mean-field stochastic linear quadratic optimal control
		problems, Automatica, vol. 49, no. 11, pp. 3222-3233, 2013.
		
		
		\bibitem {wang}
		Wang, H., Zhang, H., Wang, X., Optimal control for stochastic
		discrete-time systems with multiple input-delays, Proceedings of the 10th
		World Congress on Intelligent Control and Automation, Beijing, China, July
		6-8, 1529--1534 (2012)
		
		\bibitem {duncan}
		Duncan TE., Pasik-Duncan, B., Discrete Time Linear
		Quadratic Control With Arbitrary Correlated Noise, IEEE Trans. Automat.
		Control 58, 1290-1293 (2013)

		
		\bibitem {mah1} 
		Mahmudov NI., Maximum Principle for Stochastic Discrete-Time
		It\^{o} Equations , Brownian Motion: Elements, Dynamics and Applications,
		Chapter 6, Nova (2015)
	
		\bibitem {HuangLiYong}
		Huang, J., Li, X., Yong, J., A
		linear-quadratic optimal control problem for mean-field stochastic
		differential equations in infinite horizon, Mathematical
		Control and Related Fields, vol. 5, no. 1, pp. 97-139, March 2015.
	
	
		\bibitem{peng}
		Peng, S., A general stochastic maximum principle for optimal control problems, SIAM Journal on Control and Optimization 28 (1990) 966–979.
		
		\bibitem {NiElliottLi}
		Ni,Y., Elliott R., Li, X., Discrete-time
		mean-field Stochastic linear-quadratic optimal control problems, II: Infinite
		horizon case, Automatica, vol. 57, pp. 65-77, July 2015.
		
		\bibitem {Anderson}
		Anderson BD. , Moore, JB., Optimal control: Linear
		quadratic methods. Englewood Cliffs, New Jersey: Prentice Hall, 1990.
		
		\bibitem {Yong} 
		Yong, J., Linear-quadratic optimal control
		problems for mean-field stochastic differential equations,
		SIAM Journal on Control and Optimization, vol. 51, no. 4, pp. 2809-2838, 2013.
		
		\bibitem {RamiZhou}
		Rami M. A., Zhou, X., Linear matrix inequalities, Riccati
		equations, and indefinite stochastic linear quadratic controls. IEEE Transactions on Automatic Control, vol. 45, no. 6, pp. 1131-1143, June 2000.
		
		\bibitem{ku}
		Ku, R.T., Athans, M., Further results on the uncertainty threshold principle, IEEE Trans. Autom. Control 22 (1977) 866–868.
		\bibitem{beghi}
		Beghi, A., Alessandro, DD., Discrete-time optimal control with control-dependent noise and generalized Riccati difference equations, Automatica 34 (8)(1998) 1031–1034.
		\bibitem{huang}
		Huang, Y., Zhang W.,, Study on stochastic linear quadratic optimal control with constraint, Acta Autom. Sin. 32 (2) (2006) 246–254.
			 \bibitem{yusubov-mahmudov}
		Yusubov, Sh., Mahmudov, E., Optimality conditions of singular controls for
		systems with Caputo fractional derivatives, J. Ind. Manag. Optim., (2021) doi: 10.3934/jimo.2021182. 
	\end{thebibliography}
\end{document}